\def\@tocline#1#2#3#4#5#6#7{\relax
  \ifnum #1>\c@tocdepth 
  \else
    \par \addpenalty\@secpenalty\addvspace{#2}%
    \begingroup \hyphenpenalty\@M
    \@ifempty{#4}{%
      \@tempdima\csname r@tocindent\number#1\endcsname\relax
    }{%
      \@tempdima#4\relax
    }%
    \parindent\z@ \leftskip#3\relax \advance\leftskip\@tempdima\relax
    \rightskip\@pnumwidth plus4em \parfillskip-\@pnumwidth
    #5\leavevmode\hskip-\@tempdima
      \ifcase #1
       \or\or \hskip 1em \or \hskip 2em \else \hskip 3em \fi%
      #6\nobreak\relax
    \dotfill\hbox to\@pnumwidth{\@tocpagenum{#7}}\par
    \nobreak
    \endgroup
  \fi}
\newtheorem{thm}{Theorem}[section]
\newtheorem{lemma}[thm]{Lemma}
\newtheorem{thmx}{Theorem}
\newtheorem{corx}[thmx]{Corollary}
\theoremstyle{definition}
\newcommand{\cald}{{\mathcal{D}}}
\newcommand{\cale}{{\mathcal{E}}}
\newcommand{\calf}{{\mathcal{F}}}
\newcommand{\calm}{{\mathcal{M}}}
\newcommand{\calo}{{\mathcal{O}}}
\DeclareMathOperator{\cd}{\mathrm{cd}}
\DeclareMathOperator{\TC}{\sf{TC}}
\tikzstyle{blackNode}=[fill=black, draw=black, shape=circle]
\newcounter{commentcounter}
\newcommand{\F}{{\mathcal{F}}}
\newcommand{\E}{{\mathcal{E}}}
\newcommand{\calH}{{\mathcal{H}}}
\newcommand{\TR}{{\mathcal{TR}}}
\newcommand{\calD}{{\mathcal{D}}}
\newcommand{\spann}[1]{{\ensuremath \langle{#1}\rangle}}
\newcommand{\EFG}{E_\mathcal{F}G}
\newcommand{\OFG}{\mathcal{O}_\mathcal{F}G}
\title{Higher topological complexity of hyperbolic groups}
\author{Sam Hughes}
\address[S.\ Hughes]{Mathematical Institute, University of Oxford, Oxford OX2 6GG, UK}
\email{sam.hughes@math.ox.ac.uk}
\author{Kevin Li}
\address[K.\ Li]{School of Mathematical Sciences, University of Southampton, Southampton SO17 1BJ, UK}
\email{kevin.li@soton.ac.uk}
\date{11th October, 2021}
\subjclass{55M30, 55R35, 20F67}
\begin{document}

\maketitle

\begin{abstract}
We prove for non-elementary torsion-free hyperbolic groups $\Gamma$ and all $r\ge 2$ that the higher topological complexity ${\sf{TC}}_r(\Gamma)$ is equal to $r\cdot \mathrm{cd}(\Gamma)$.  In particular, hyperbolic groups satisfy the rationality conjecture on the $\sf{TC}$-generating function, giving an affirmative answer to a question of Farber and Oprea.  More generally, we consider certain toral relatively hyperbolic groups.
\end{abstract}

\section{Introduction}

Let $r\ge 2$ be an integer. The higher (or sequential) topological complexity $\TC_r(X)$ of a path-connected space $X$ was introduced by Rudyak~\cite{Rudyak2010}, generalising Farber's topological complexity~\cite{Farber03}. The motivation for these numerical invariants arises from robotics. They provide a measure of complexity for the motion planning problem in the configuration space $X$ with prescribed initial and final states, as well as $r-2$ consecutive intermediate states.
More precisely, consider the path-fibration $p\colon X^{[0,1]}\to X^r$ that maps a path $\omega\colon [0,1]\to X$ to the tuple $(\omega(0),\omega(\frac{1}{r-1}),\ldots,\omega(\frac{r-2}{r-1}),\omega(1))$.
Then $\TC_r(X)$ is defined as the minimal integer $n$ for which $X^r$ can be covered by $n+1$ many open subsets $U_0,\ldots,U_n$ such that $p$ admits a local section over each $U_i$. If no such $n$ exists, one sets $\TC_r(X)\coloneqq \infty$. Note that $\TC_2(X)$ recovers the usual topological complexity.

Since the higher topological complexities are homotopy invariants, one obtains interesting invariants of groups $\Gamma$ by setting $\TC_r(\Gamma)\coloneqq \TC_r(K(\Gamma,1))$, where $K(\Gamma,1)$ is an Eilenberg--MacLane space. The topological complexities $\TC_r(\Gamma)$ have been computed for several classes of groups (see e.g.~\cite{Farber19,Farber-Mescher20,Dranishnikov20} for $r=2$, \cite{FO2019,AGGO20,GGY16} for $r\ge 2$, and references therein). In a celebrated result of Dranishnikov~\cite{Dranishnikov20} (see also~\cite{Farber-Mescher20}), the topological complexity $\TC_2(\Gamma)$ of groups with cyclic centralisers, such as hyperbolic groups, was shown to equal $\cd(\Gamma\times\Gamma)$. Here $\cd$ denotes the cohomological dimension. We generalise this result to all higher topological complexities $\TC_r(\Gamma)$ for $r\ge 2$, as well as to a larger class of groups containing certain toral relatively hyperbolic groups. 
Recall that a collection $\{P_i\ |\ i\in I\}$ of subgroups of $\Gamma$ is called \emph{malnormal}, if for all $g\in \Gamma$ and  $i,j\in I$ we have $gP_ig^{-1}\cap P_j=\{e\}$, unless $i=j$ and $g\in P_i$.

\begin{thmx}\label{thmx.main}
Let $r\geq 2$ and let $\Gamma$ be a torsion-free group with $\cd(\Gamma)\geq 2$.  Suppose that $\Gamma$ admits a malnormal collection of abelian subgroups $\{P_i\ |\ i\in I\}$ satisfying $\cd(P_i^r)<\cd(\Gamma^r)$ such that the centraliser $C_\Gamma(g)$ is cyclic for every $g\in\Gamma$ that is not conjugate into any of the $P_i$.  Then $\TC_r(\Gamma)=\cd(\Gamma^r)$.
\end{thmx}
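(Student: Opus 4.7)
The plan is to establish the two matching inequalities $\TC_r(\Gamma)\leq\cd(\Gamma^r)$ and $\TC_r(\Gamma)\geq\cd(\Gamma^r)$.

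The upper bound is classical: since $\cd(\Gamma)\geq 2$ we have $\cd(\Gamma^r)\geq 4$, so Eilenberg--Ganea supplies a model for $K(\Gamma^r,1)$ of dimension $\cd(\Gamma^r)$. Combining the standard inequality $\TC_r(X)\leq\mathrm{cat}(X^r)$ with the equality $\mathrm{cat}(K(\pi,1))=\cd(\pi)$ (valid when $\cd(\pi)\geq 3$) yields $\TC_r(\Gamma)\leq\cd(\Gamma^r)$.

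For the lower bound the plan is to invoke the Bredon-cohomological inequality
\[
\TC_r(\Gamma)\;\geq\;\cd_{\calD_r}(\Gamma^r),
\]
where $\calD_r$ is the family of subgroups of $\Gamma^r$ subconjugate to the diagonal $\Delta_r(\Gamma)$; this extends to higher $r$ the Farber--Oprea/Farber--Mescher lower bound from $r=2$. Hence the task reduces to $\cd_{\calD_r}(\Gamma^r)\geq\cd(\Gamma^r)$. To analyse the Bredon dimension I would decompose $\calD_r$ according to the hypotheses: every $H\in\calD_r$ is conjugate to $\Delta_r(K)$ for some $K\leq\Gamma$, and the malnormality of $\{P_i\}$ together with cyclicity of non-parabolic centralisers separates such $K$ into a \emph{parabolic} type (where $K$ lies in some $gP_ig^{-1}$) and a \emph{non-parabolic} type (severely constrained by $C_\Gamma(g)$ being cyclic for all $g\in K$ outside the parabolics). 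The plan is then to apply a L\"uck--Weiermann pushout to realise $E_{\calD_r}(\Gamma^r)$ as a homotopy pushout of a classifying space for the parabolic subfamily (assembled from the $EP_i^r$'s) and a classifying space for the non-parabolic subfamily, and to read off the Bredon cohomology from the resulting Mayer--Vietoris sequence.

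The main obstacle is exhibiting a nonzero class in $H^{\cd(\Gamma^r)}_{\calD_r}(\Gamma^r;M)$ for a suitable coefficient system $M$. Here the two hypotheses do separate work: the assumption $\cd(P_i^r)<\cd(\Gamma^r)$ kills the parabolic contribution in top degree, and the cyclic-centraliser hypothesis forces the non-parabolic stratum to have its Bredon cohomology concentrated in codimension at least one. A diagram chase through the Mayer--Vietoris sequence should then promote a generator of $H^{\cd(\Gamma^r)}(\Gamma^r)$ to a nonzero class in $H^{\cd(\Gamma^r)}_{\calD_r}(\Gamma^r;M)$, yielding $\cd_{\calD_r}(\Gamma^r)\geq\cd(\Gamma^r)$ and completing the proof. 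I expect the most delicate point to be verifying that the non-parabolic pushout piece genuinely has Bredon dimension strictly below $\cd(\Gamma^r)$ when the ``diagonal cyclic'' subgroups are accounted for correctly, since this is exactly where the hyperbolic-like cyclic centraliser assumption must be converted into a concrete dimension bound.
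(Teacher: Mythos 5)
Your overall strategy --- the $\mathrm{cat}$ upper bound, and a lower bound extracted from Bredon cohomology of the diagonal family via a L\"uck--Weiermann decomposition and Mayer--Vietoris --- is the same route the paper takes, and your reading of how the two hypotheses enter (the parabolic stratum killed by $\cd(P_i^r)<\cd(\Gamma^r)$, the rest by cyclic centralisers) is correct. However, there is a genuine gap at the pivot of your lower bound: the inequality $\TC_r(\Gamma)\ge \cd_{\calD}(\Gamma^r)$ that you propose to ``invoke'' is not the Farber--Oprea lower bound and is not available. The classifying-space characterisation says that $\TC_r(\Gamma)$ is the least $n$ for which the canonical map $E(\Gamma^r)\to E_\calD(\Gamma^r)$ compresses $\Gamma^r$-equivariantly into the $n$-skeleton; from this one only gets the \emph{upper} bound $\TC_r(\Gamma)\le\gd_\calD(\Gamma^r)$, while the cohomological \emph{lower} bound it yields is: $\TC_r(\Gamma)\ge n$ whenever the induced map $H^n_{\Gamma^r}(E_\calD(\Gamma^r);A)\to H^n_{\Gamma^r}(E(\Gamma^r);A)$ is nonzero for some $\mathcal{O}_\calD(\Gamma^r)$-module $A$. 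Knowing merely that $H^n_{\Gamma^r}(E_\calD(\Gamma^r);A)\neq 0$, i.e.\ $\cd_\calD(\Gamma^r)\ge n$, does not obstruct the compression, since such a class could die under restriction to $E(\Gamma^r)$. Fortunately, what you actually plan to do --- promote a nonzero class of $H^{n}(\Gamma^r;A)$, $n=\cd(\Gamma^r)$, to a class of $H^n_{\Gamma^r}(E_\calD(\Gamma^r);A)$ through the Mayer--Vietoris sequences --- amounts to proving surjectivity of the induced map, which is exactly the required nontriviality. So the argument is repaired by replacing the unjustified inequality with the induced-map criterion; but as written the final inference of the lower bound does not follow.

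A secondary point: the L\"uck--Weiermann construction does not glue classifying spaces of two complementary subfamilies; it compares a \emph{nested} pair $\E\subset\F$ under the conditions $(M)$ and $(NM)$ on maximal elements of $\F\setminus\E$. The paper accordingly interposes an intermediate family $\calf_1$ generated by the twisted diagonals $H_{\gamma,b}$ of centralisers of nontrivial elements and runs two successive pushouts, for $\TR\subset\calf_1$ and for $\calf_1\subset\calD$. The computations that make this work are that $N_{\Gamma^r}(H_{\gamma,b})\cong C_\Gamma(b)^r$ has cohomological dimension strictly below $\cd(\Gamma^r)$ (in the cyclic case because $\cd(\Gamma)\ge 2$ forces $r<\cd(\Gamma^r)$, in the parabolic case by hypothesis), and that $(M)$ and $(NM)$ hold precisely because of malnormality. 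You should organise your ``parabolic versus non-parabolic'' splitting as such a nested chain of families and verify these conditions explicitly, since that is where the malnormality hypothesis is actually consumed.
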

\noindent The preceding theorem was obtained for the case $r=2$ by the second author in~\cite{Li2021}.

For a space $X$, the \emph{$\TC$-generating function} $f_X(t)$ is defined as the formal power series
\[
    f_X(t):=\sum_{r=1}^\infty\TC_{r+1}(X)\cdot t^r .
\]
The $\TC$-generating function of a group $\Gamma$ is set to be $f_\Gamma(t)\coloneqq f_{K(\Gamma,1)}(t)$.
Recall that a group $\Gamma$ is said to be \emph{of type $F$} (or \emph{geometrically finite}) if it admits a finite model for $K(\Gamma,1)$.

Following~\cite{FO2019}, we say that a finite CW-complex $X$ (resp.\ a group $\Gamma$ of type $F$) satisfies the \emph{rationality conjecture} if the $\TC$-generating function $f_X(t)$ (resp.\ $f_\Gamma(t)$) is a rational function of the form $\frac{P(t)}{(1-t)^2}$, where $P(t)$ is an integer polynomial with $P(1)=\operatorname{cat}(X)$ (resp.\ $P(1)=\cd(\Gamma)$). Here $\operatorname{cat}$ denotes the Lusternik--Schnirelmann category. While a counter-example to the rationality conjecture for finite CW-complexes was found in~\cite{FKS20}, the rationality conjecture for groups of type $F$ remains open. It is known to hold, e.g.\ for abelian groups of type $F$, right-angled Artin groups, fundamental groups of closed orientable surfaces, and Higman's group (see~\cite[Section 8]{FO2019}). 
Our result extends the class of groups for which the rationality conjecture holds as follows.

\begin{corx}\label{corx.growth}
Let $\Gamma$ be a group as in Theorem~\ref{thmx.main}.  If $\Gamma$ is of type $F$, then
\[f_\Gamma(t)=\cd(\Gamma)\frac{(2-t)t}{(1-t)^2}. \]
In particular, the rationality conjecture holds for $\Gamma$.
\end{corx}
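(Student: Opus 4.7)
The plan is to combine Theorem~\ref{thmx.main} with the standard K\"unneth-style additivity of cohomological dimension under direct products for groups of type $F$, and then evaluate the resulting power series in closed form.

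First, I would invoke Theorem~\ref{thmx.main} to obtain $\TC_r(\Gamma)=\cd(\Gamma^r)$ for every $r\geq 2$. Next, I would establish that $\cd(\Gamma^r)=r\cdot\cd(\Gamma)$ whenever $\Gamma$ is of type $F$. The upper bound $\cd(\Gamma^r)\leq r\cdot\cd(\Gamma)$ is routine: the $r$-fold tensor product over $\ZZ$ of a finitely generated projective resolution of $\ZZ$ by $\ZZ\Gamma$-modules yields an analogous resolution over $\ZZ\Gamma^r$. For the matching lower bound, I would use that a group of type $F$ is of type $\mathrm{FP}$, hence (by Bieri's criterion) satisfies $H^{\cd(\Gamma)}(\Gamma;\ZZ\Gamma)\neq 0$; iterating the cohomological cross product via K\"unneth then produces a nonzero class in $H^{r\cdot\cd(\Gamma)}(\Gamma^r;\ZZ\Gamma^r)$, giving $\cd(\Gamma^r)\geq r\cdot\cd(\Gamma)$.

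Combining these two facts yields $\TC_{r+1}(\Gamma)=(r+1)\cd(\Gamma)$ for every $r\geq 1$. A direct manipulation of formal power series then gives
\[f_\Gamma(t)=\cd(\Gamma)\sum_{r=1}^{\infty}(r+1)\,t^{r}=\cd(\Gamma)\left(\frac{t}{(1-t)^{2}}+\frac{t}{1-t}\right)=\cd(\Gamma)\frac{(2-t)t}{(1-t)^{2}}.\]
Writing $P(t)=\cd(\Gamma)(2-t)t$, one sees that $P$ is an integer polynomial with $P(1)=\cd(\Gamma)$, confirming the rationality conjecture for $\Gamma$.

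The only substantive point in this argument is the product formula $\cd(\Gamma^r)=r\cdot\cd(\Gamma)$; everything else is either an appeal to Theorem~\ref{thmx.main} or a formal calculation. I do not expect a genuine obstacle, since both the K\"unneth lower bound (via the nonvanishing of $H^{\cd(\Gamma)}(\Gamma;\ZZ\Gamma)$ for type $\mathrm{FP}$ groups) and the tensor-product upper bound are classical; it is worth citing Bieri's work explicitly to make the deduction self-contained.
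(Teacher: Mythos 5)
Your overall route is the same as the paper's: apply Theorem~\ref{thmx.main}, reduce to the product formula $\cd(\Gamma^r)=r\cdot\cd(\Gamma)$ for groups of type $F$, and then do the (correct) formal power series computation, noting $P(t)=\cd(\Gamma)(2-t)t$ satisfies $P(1)=\cd(\Gamma)$. The paper simply cites \cite[Corollary 2.5]{Dranishnikov19} for the product formula, whereas you attempt to prove it in-line, and it is exactly there that your argument has a gap.

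The upper bound via tensoring resolutions is fine, and Bieri's criterion does give $H^{n}(\Gamma;\ZZ\Gamma)\neq 0$ for $n=\cd(\Gamma)$ when $\Gamma$ is of type $FP$. But the K\"unneth step does not finish the job: the K\"unneth exact sequence only injects $H^{n}(\Gamma;\ZZ\Gamma)^{\otimes r}$ into $H^{rn}(\Gamma^r;\ZZ\Gamma^r)$, and a tensor product of nonzero abelian groups can perfectly well vanish (e.g.\ $\QQ\otimes\ZZ/2=0$ and $(\QQ/\ZZ)\otimes(\QQ/\ZZ)=0$). So ``iterating the cross product'' produces a class that is not obviously nonzero, and the nonvanishing of $H^{n}(\Gamma;\ZZ\Gamma)^{\otimes r}$ is precisely the nontrivial content of Dranishnikov's theorem for geometrically finite groups --- it requires a structural statement about the top-degree group $H^{n}(\Gamma;\ZZ\Gamma)$ (ruling out the divisibility phenomena above), not just its nonvanishing. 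This is why additivity of $\cd$ under products is a genuine theorem with hypotheses rather than a formal consequence of K\"unneth. To repair your write-up, either cite the product formula as the paper does, or supply the missing lemma about $H^{n}(\Gamma;\ZZ\Gamma)$ for groups of type $F$. Everything else in your proposal is correct.
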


As remarked by Farber and Oprea in~\cite[page~159]{FO2019}, it is particularly interesting to determine the validity of the rationality conjecture for the class of hyperbolic groups. We answer their question in the affirmative.

\begin{corx}\label{corx.hyp}
The rationality conjecture holds for torsion-free hyperbolic groups.
\end{corx}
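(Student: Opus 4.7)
The plan is to deduce Corollary~\ref{corx.hyp} directly from Corollary~\ref{corx.growth} for the main case $\cd(\Gamma)\geq 2$, after disposing of the low-dimensional cases by citing known results. Let $\Gamma$ be a torsion-free hyperbolic group. If $\cd(\Gamma)\leq 1$, then the Stallings--Swan theorem implies that $\Gamma$ is either trivial or a finitely generated free group; since free groups are right-angled Artin groups, the rationality conjecture is already covered in this range by~\cite[Section~8]{FO2019}.

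Now assume $\cd(\Gamma)\geq 2$. Two classical facts about torsion-free hyperbolic groups allow Theorem~\ref{thmx.main} to apply. First, $\Gamma$ is of type $F$, via the Rips complex $P_d(\Gamma)$ for $d$ sufficiently large: this complex is contractible and finite-dimensional, and torsion-freeness ensures that $\Gamma$ acts on it freely and cocompactly, so its quotient is a finite $K(\Gamma,1)$. Second, every nontrivial $g\in\Gamma$ has infinite cyclic centraliser, since $g$ is contained in a unique maximal virtually cyclic subgroup which coincides with $C_\Gamma(g)$ and is $\ZZ$ by torsion-freeness. With these facts in hand, I would take the malnormal collection to consist solely of the trivial subgroup, $\{P_1\}=\{\{e\}\}$: this absorbs the identity element (whose centraliser is all of $\Gamma$, hence not cyclic), while abelianness, malnormality, and the dimension condition $\cd(\{e\}^r)=0<\cd(\Gamma^r)$ are automatic. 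Hence the hypotheses of Theorem~\ref{thmx.main} are met, and Corollary~\ref{corx.growth} yields
\[ f_\Gamma(t)=\cd(\Gamma)\cdot\frac{(2-t)t}{(1-t)^2}, \]
which is of the required form $P(t)/(1-t)^2$ with $P(t)=\cd(\Gamma)\cdot(2-t)t$ and $P(1)=\cd(\Gamma)$.

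No substantial obstacle remains: all of the heavy lifting is already contained in Theorem~\ref{thmx.main}, and the only specialisation needed to the hyperbolic setting is the classical cyclic centraliser property together with the Rips complex construction. The only mild subtlety is the bookkeeping around the identity, whose non-cyclic centraliser forces the trivial subgroup into the peripheral collection; this is harmless because the trivial subgroup contributes nothing to the cohomological dimension estimates.
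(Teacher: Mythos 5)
Your proof is correct and follows essentially the same route as the paper, which simply invokes Corollary~\ref{corx.growth} together with the fact that torsion-free hyperbolic groups are of type $F$. You are in fact more careful than the paper's one-line argument: you explicitly verify the hypotheses of Theorem~\ref{thmx.main} (taking the malnormal collection to be the trivial subgroup, and using cyclic centralisers of nontrivial elements) and you separately dispose of the case $\cd(\Gamma)\leq 1$, where $\Gamma$ is free and the conjecture is already known via the right-angled Artin group case of~\cite{FO2019} --- a case the paper leaves implicit since Corollary~\ref{corx.growth} requires $\cd(\Gamma)\geq 2$.
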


\subsection*{Acknowledgements}
Both authors would like to thank the organisers of the workshop ``Advances in Homotopy Theory'' held in September 2021 at the Southampton Centre for Geometry, Topology, and Applications which inspired this work. The first author was supported by the Engineering and Physical Sciences Research Council grant number 2127970.

\section{Background}

We recall the notion of a classifying space for a family of subgroups (see e.g.~\cite{Lueck05survey}). A \emph{family $\F$ of subgroups} of a group $G$ is a non-empty set of subgroups that is closed under conjugation and finite intersections. The family consisting only of the trivial subgroup is denoted by $\TR$. The family $\F\spann{\calH}$ generated by a set of subgroups $\calH$ is the smallest family containing $\calH$. For a family $\F$ of subgroups of $G$ and a subgroup $H$ of $G$, we denote by $\F|_H$ the family $\{L\cap H\ |\ L\in \F\}$ of subgroups of $H$.
A \emph{classifying space $\EFG$ for the family $\F$} is a terminal object in the $G$-homotopy category of $G$-CW-complexes with isotropy groups in $\F$. Note that a model for $E_\TR G$ is given by $EG$ and in particular, there exists a $G$-map $EG\to \EFG$ that is unique up to $G$-homotopy.

Let $\E\subset \F$ be two families of subgroups of $G$. We say that $G$ satisfies condition $(M_{\E\subset \F})$ if every element in $\F\setminus \E$ is contained in a unique maximal element $M\in \F\setminus \E$, and that $G$ satisfies condition $(NM)_{\E\subset \F}$ if additionally $M$ equals its normaliser $N_G(M)$. We denote the Weyl group by $W_G(M)\coloneqq N_G(M)/M$.
The following proposition is a special case of a construction due to L\"uck and Weiermann~\cite[Corollary~2.8]{Lueck-Weiermann12} stated in \cite[Corollary~2.2]{Li2021}.

\begin{thm}[L\"uck--Weiermann]\label{prop.LW}
	Let $G$ be a group and $\cale\subset\calf$ be two families of subgroups. Let $\{M_i\ |\ i\in I\}$ be a complete set of representatives for the conjugacy classes of maximal elements in $\F\setminus \E$.
\begin{enumerate}[label={\rm{(\roman*)}}]
	\item\label{prop.LW.TRIV} If $\E=\TR$ and $G$ satisfies condition $(M_{\TR\subset \calf})$, then a model for $E_\calf G$ is given by the following $G$-pushout
	\[\begin{tikzcd}
		\coprod_{i\in I} G\times_{N_G(M_i)}E(N_G(M_i))\ar{r}\ar{d} & EG\ar{d} \\
		\coprod_{i\in I} G\times_{N_G(M_i)}E(W_G(M_i)) \ar{r} & E_\calf G ;
	\end{tikzcd}\]

	\item\label{prop.LW.NM} If $G$ satisfies conditions $(M_{\cale\subset \calf})$ and $(NM_{\cale\subset \calf})$, then a model for $E_\calf G$ is given by the following $G$-pushout
	\[\begin{tikzcd}
		\coprod_{i\in I} G\times_{M_i}E_{\cale|_{M_i}} M_i\ar{r}\ar{d} & E_\cale G\ar{d} \\
		\coprod_{i\in I} G/M_i \ar{r} & E_\calf G .
	\end{tikzcd}\]
\end{enumerate}
\end{thm}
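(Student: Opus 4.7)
The plan is to verify that the constructed $G$-CW-pushout $X$ is a model for $E_\F G$ by checking the two defining properties: all point stabilisers of $X$ lie in $\F$, and $X^H$ is weakly contractible for every $H\in\F$. I would combine two general principles: that the property ``all stabilisers in $\F$'' is preserved under $G$-CW-pushouts, and that taking $H$-fixed points commutes with $G$-CW-pushouts.

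I would begin with the stabiliser check by inspecting each corner. In part (i), $EG$ has trivial stabilisers (hence in $\F$), and the $G$-action on $G\times_{N_G(M_i)}E(W_G(M_i))$ has stabilisers conjugate to $M_i\in\F$: this is because the $N_G(M_i)$-action on $E(W_G(M_i))$ factors through a free $W_G(M_i)$-action, so the stabiliser of a point is exactly $M_i$. In part (ii), the corners $E_\E G$, $G\times_{M_i}E_{\E|_{M_i}}M_i$, and $G/M_i$ have stabilisers in $\E\subset\F$, conjugate to subgroups in $\E|_{M_i}\subset\E$, and conjugate to $M_i\in\F$, respectively.

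For the fixed-point computation I would use the elementary formula
\[
(G\times_L Y)^H \;=\; \coprod_{gL\in (G/L)^H} Y^{g^{-1}Hg}, \qquad (G/L)^H=\{gL:g^{-1}Hg\leq L\},
\]
together with the commutation of $(-)^H$ with $G$-CW-pushouts. The maximality condition $(M_{\E\subset\F})$ is precisely what is needed to control the index set: for $H\in\F\setminus\E$, the unique maximal element of $\F\setminus\E$ containing $H$ is conjugate to exactly one $M_i$, and the set of cosets $gN_G(M_i)$ (respectively $gM_i$ in part (ii)) with $g^{-1}Hg\leq M_i$ consists of a single coset. For such $H$, the $H$-fixed parts of the top-left and top-right corners are then empty (they are free $G$-CW-complexes restricted to a non-trivial subgroup), while the bottom-left contributes one copy of $E(W_G(M_i))$ in part (i), or a single point in part (ii); the resulting pushout $\emptyset\leftarrow\emptyset\to\mathrm{contractible}$ is contractible. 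For $H\in\E$ in part (ii) (or $H=1$ in part (i)), the stabiliser argument empties the top-left contribution, and the remaining corners compare to $E_\E G$ (respectively $EG$) to yield a pushout equivalent to the contractible space $(E_\E G)^H$.

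The main obstacle is the bookkeeping in the $H\in\F\setminus\E$ case: showing that exactly one index $i$ and one coset contribute requires combining the uniqueness clause in $(M_{\E\subset\F})$ with the self-normalising clause in $(NM_{\E\subset\F})$ used in part (ii). The cleanest packaging, following L\"uck--Weiermann, is to introduce the equivalence relation on $\F\setminus\E$ given by $H\sim K$ iff $H,K$ lie in a common element of $\F\setminus\E$; one checks that under $(M_{\E\subset\F})$ this is indeed an equivalence relation with canonical maximal representatives $M_i$, and the pushouts in (i) and (ii) become the specialisations of a general construction attached to $\sim$.
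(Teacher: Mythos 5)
The paper does not actually prove this statement: it is imported from L\"uck--Weiermann \cite[Corollary~2.8]{Lueck-Weiermann12} (via \cite[Corollary~2.2]{Li2021}), so there is no in-paper argument to compare against. Your sketch does, however, reproduce the strategy of the proof in the cited source: check that the pushout has isotropy in $\F$, and that its $H$-fixed point sets are weakly contractible for $H\in\F$ and empty otherwise, using that fixed points commute with $G$-CW-pushouts (one leg being a $G$-cofibration) together with the coset formula for fixed points of induced spaces. The conditions $(M_{\E\subset\F})$ and $(NM_{\E\subset\F})$ enter exactly where you say they do --- in part (i) the uniqueness of the coset $gN_G(M_i)$ with $g^{-1}Hg\le M_i$ already follows from $(M)$ alone, while in part (ii) the cosets run over $G/M_i$ and one needs $N_G(M_i)=M_i$ --- and the equivalence relation ``lie in a common element of $\F\setminus\E$'' is the right packaging.

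One step is misstated. For $H=1$ in part (i) (and $H\in\E$ in part (ii)) the $H$-fixed points of the top-left corner are certainly not empty: for $H=1$ they are all of $\coprod_i G\times_{N_G(M_i)}E(N_G(M_i))$. What actually makes this case work is that the \emph{left-hand vertical map} becomes a weak equivalence on $H$-fixed points: in (i), $E(N_G(M_i))\to E(W_G(M_i))$ is a non-equivariant homotopy equivalence of contractible spaces; in (ii), for $H\in\E$ the $H$-fixed points of $G\times_{M_i}E_{\E|_{M_i}}M_i$ form a disjoint union of contractible pieces indexed by precisely the cosets appearing in $(G/M_i)^H$. Consequently the pushout of $H$-fixed sets is weakly equivalent to its top-right corner, namely $(EG)^H$ resp.\ $(E_\E G)^H$, which is contractible. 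Relatedly, your parenthetical justification for the emptiness of the top row when $H\in\F\setminus\E$ (``they are free $G$-CW-complexes'') only applies in part (i); in part (ii) the top corners have isotropy in $\E$ (resp.\ $\E|_{M_i}$), and one must instead invoke the fixed-point characterisation of $E_\E G$. With these repairs your argument is the standard one and goes through.
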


The \emph{$\F$-restricted orbit category $\OFG$} has $G$-sets $G/H$ with $H\in \F$ as objects and $G$-maps as morphisms. Let $A$ be an \emph{$\OFG$-module}, that is a contravariant functor from the orbit category $\OFG$ to the category of modules.
The equivariant cellular cohomology $H^*_G(X;A)$ of a $G$-CW-complex $X$ with isotropy groups in $\F$ is called \emph{Bredon cohomology}~\cite{Bredon67}. In particular, Bredon cohomology satisfies the Mayer--Vietoris axiom for $G$-pushouts.

The (higher) topological complexity $\TC_r(\Gamma)$ of a group $\Gamma$ for $r\ge 2$ can be characterised in terms of classifying spaces for families~\cite[Theorem 3.1]{FO2019}, generalising a result of Farber--Grant--Lupton--Oprea~\cite[Theorem 3.3]{Farber19} for the case $r=2$. Consider $G=\Gamma^r$ and let $\calD$ be the family of subgroups that is generated by the diagonal subgroup $\Delta(\Gamma)\subset \Gamma^r$.
\begin{thm}[Farber--Oprea]\label{thm.Farber-Oprea}
    Let $\Gamma$ be a group and $r\ge 2$. Then $\TC_r(\Gamma)$ equals the infimum of integers $n$ for which the canonical $\Gamma^r$-map
    \[
        E(\Gamma^r)\to E_\calD(\Gamma^r)
    \]
    is $\Gamma^r$-equivariantly homotopic to a $\Gamma^r$-map with values in the $n$-skeleton $E_\calD(\Gamma^r)^{(n)}$.
\end{thm}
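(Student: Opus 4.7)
I would generalize the proof of Farber--Grant--Lupton--Oprea for the case $r=2$~\cite{Farber19} to arbitrary $r\geq 2$, in three stages: reformulate $\TC_r(X)$ as the Schwarz genus of the motion planning fibration, pass to an equivariant lift on the universal cover of $X^r$, and convert this equivariant sectional category into a compression statement about the canonical map to $E_\calD(\Gamma^r)$.

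First, I would identify the fibre-homotopy type of $p\colon X^{[0,1]}\to X^r$ for $X=K(\Gamma,1)$. Over a point $(x_0,\ldots,x_{r-1})\in X^r$, the fibre consists of paths taking prescribed values at the $r$ evaluation times; each path component is contractible since $X$ is aspherical, and the set of components is in natural $\Gamma^r$-equivariant bijection with the homogeneous space $\Gamma^r/\Delta(\Gamma)$ by recording the $(r-1)$ consecutive homotopy classes of sub-paths. Thus $p$ is fibre-homotopy equivalent to the associated bundle $E(\Gamma^r)\times_{\Gamma^r}\bigl(\Gamma^r/\Delta(\Gamma)\bigr)\to X^r$.

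Next, I would verify that the Schwarz genus of $p$ coincides with the $\Gamma^r$-equivariant Schwarz genus of the trivial projection $E(\Gamma^r)\times\Gamma^r/\Delta(\Gamma)\to E(\Gamma^r)$, by matching local sections of $p$ over $U\subseteq X^r$ with $\Gamma^r$-equivariant local sections over the preimage of $U$ in $E(\Gamma^r)$. Finally, a cover of $E(\Gamma^r)$ by $n+1$ saturated open sets admitting equivariant sections corresponds, via a partition of unity, to a $\Gamma^r$-equivariant map $E(\Gamma^r)\to (\Gamma^r/\Delta(\Gamma))^{*(n+1)}$ into the $(n+1)$-fold join. This join is an $n$-dimensional, $(n-1)$-connected $\Gamma^r$-CW-complex whose isotropy groups are intersections of conjugates of $\Delta(\Gamma)$ and therefore lie in $\calD$; it is naturally the $n$-skeleton of the infinite join, which is a Milnor-type model for $E_\calD(\Gamma^r)$. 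The terminal property of $E_\calD(\Gamma^r)$ then shows that the existence of such an equivariant map is equivalent to the canonical $\Gamma^r$-map $E(\Gamma^r)\to E_\calD(\Gamma^r)$ being $\Gamma^r$-homotopic to one with image in the $n$-skeleton, as desired.

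The main obstacle is the comparison of ordinary and equivariant sectional category in the second stage, which requires working consistently with $\Gamma^r$-saturated open covers and exploiting the freeness of the $\Gamma^r$-action on $E(\Gamma^r)$ to transport sections across the universal cover. A secondary subtlety is the identification of the $(n+1)$-fold join of the orbit $\Gamma^r/\Delta(\Gamma)$ with a model for the $n$-skeleton of $E_\calD(\Gamma^r)$ up to $\Gamma^r$-homotopy, for which I would invoke the standard fact that iterated joins of orbits of type $\calD$ realize the classifying space for the family generated by $\Delta(\Gamma)$.
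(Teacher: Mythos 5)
This theorem is quoted from Farber--Oprea~\cite[Theorem 3.1]{FO2019} and not proved in the paper, and your sketch reconstructs essentially the argument of that reference (generalising Farber--Grant--Lupton--Oprea's $r=2$ case): identify $p$ up to fibre homotopy with the covering associated to the $\Gamma^r$-set $\Gamma^r/\Delta(\Gamma)$, pass to the equivariant sectional category over $E(\Gamma^r)$, and compare iterated joins with skeleta of $E_\calD(\Gamma^r)$. The one step deserving more care is the last identification: $(\Gamma^r/\Delta(\Gamma))^{*(n+1)}$ is an $n$-dimensional $\Gamma^r$-subcomplex of the infinite-join model but not literally its $n$-skeleton, so to get the stated equivalence you should invoke the standard compression/model-independence lemma --- any $n$-dimensional $\Gamma^r$-CW-complex with isotropy in $\calD$ admits a $\Gamma^r$-map to the $(n+1)$-fold join because the fixed-point sets $\bigl((\Gamma^r/\Delta(\Gamma))^{*(n+1)}\bigr)^K$ for $K\in\calD$ are $(n-1)$-connected --- rather than relying only on the terminal property of $E_\calD(\Gamma^r)$.
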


As a consequence~\cite[Theorem 5.1]{FO2019}, a lower bound for $\TC_r(\Gamma)$ is given by the supremum of integers $n$ for which the canonical $\Gamma^r$-map $E(\Gamma^r)\to E_\calD(\Gamma^r)$ induces a non-trivial map in Bredon cohomology 
\[
    H^n_{\Gamma^r}(E_\calD(\Gamma^r);A)\to H^n_{\Gamma^r}(E(\Gamma^r);A)
\] 
for some $\mathcal{O}_\calD(\Gamma^r)$-module $A$.

\section{Proofs}

Fix an integer $r\ge 2$.
Let $\Gamma$ be a group and $\Delta\colon \Gamma\to \Gamma^r$ be the diagonal map.
For $\gamma=(\gamma_1,\ldots,\gamma_{r-1})\in \Gamma^{r-1}$ and a subset $S\subset \Gamma$, we define the subgroup $H_{\gamma,S}$ of $\Gamma^r$ as
\[
    H_{\gamma,S}\coloneqq (\gamma_1,\ldots,\gamma_{r-1},e)\cdot \Delta(C_\Gamma(S))\cdot (\gamma_1^{-1},\ldots,\gamma_{r-1}^{-1},e) .
\]
Here $C_\Gamma(S)$ denotes the centraliser of $S$ in $\Gamma$.
For $b\in \Gamma$, we write $H_{\gamma,b}$ instead of $H_{\gamma,\{b\}}$. Denote the element $\underline{e}\coloneqq (e,\ldots,e)\in \Gamma^{r-1}$ and note that $H_{\underline{e},e}=\Delta(\Gamma)$.
The elementary proof of the following lemma is omitted.

\begin{lemma}\label{lem:HgOperations}
    Let $\gamma=(\gamma_1,\ldots,\gamma_{r-1}), \delta=(\delta_1,\ldots,\delta_{r-1})\in \Gamma^{r-1}$ and $S,T\subset \Gamma$ be subsets. The following hold:
    \begin{enumerate}[label={\rm{(\roman*)}}]
        \item For every $g=(g_1,\ldots,g_r)\in \Gamma^r$ we have 
        \[
            g\cdot H_{\gamma,S}\cdot g^{-1}= H_{\gamma',S'} ,
        \]
        where $\gamma'=(g_1\gamma_1g_r^{-1},\ldots,g_{r-1}\gamma_{r-1}g_r^{-1})\in \Gamma^{r-1}$ and $S'=\{g_rsg_r^{-1}\in \Gamma \ |\ s\in S\}\subset \Gamma$;
        \item $H_{\gamma,S}\cap H_{\delta,T}=H_{\gamma,S\cup T\cup \{\delta_1^{-1}\gamma_1,\ldots,\delta_{r-1}^{-1}\gamma_{r-1}\}}$;
        \item $N_{\Gamma^r}(H_{\gamma,S})=\{(\gamma_1k_1h\gamma_1^{-1},\ldots,\gamma_{r-1}k_{r-1}h\gamma_{r-1}^{-1},h)\in \Gamma^r\ |\ h\in N_\Gamma(C_\Gamma(S)), k_1,\ldots,k_{r-1}\in C_\Gamma(C_\Gamma(S))\}$.
    \end{enumerate}
\end{lemma}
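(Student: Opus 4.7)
The plan is to parametrise elements of $H_{\gamma,S}$ explicitly and verify each part by direct computation. Every element of $H_{\gamma,S}$ has the unique form $(\gamma_1 c \gamma_1^{-1},\ldots,\gamma_{r-1} c \gamma_{r-1}^{-1}, c)$ with $c$ ranging over $C_\Gamma(S)$. The key identity underlying everything is $C_\Gamma(g_r S g_r^{-1}) = g_r C_\Gamma(S) g_r^{-1}$, so conjugating the centraliser parameter corresponds to conjugating the subset $S$.

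For (i), conjugating a typical element by $g=(g_1,\ldots,g_r)$ yields $g_r c g_r^{-1}$ in the last coordinate; I set $c' \coloneqq g_r c g_r^{-1}$, which is a bijection between $C_\Gamma(S)$ and $C_\Gamma(g_r S g_r^{-1})$ by the above identity. The $i$-th coordinate $g_i \gamma_i c \gamma_i^{-1} g_i^{-1}$ rewrites as $(g_i \gamma_i g_r^{-1}) c' (g_i \gamma_i g_r^{-1})^{-1}$, identifying the whole subgroup as $H_{\gamma',S'}$ with the stated $\gamma'$ and $S'$. For (ii), an element in the intersection must agree in the last coordinate, so $c = c' \in C_\Gamma(S)\cap C_\Gamma(T)$, and the equations $\gamma_i c \gamma_i^{-1} = \delta_i c \delta_i^{-1}$ in the first $r-1$ coordinates are equivalent to $c$ commuting with each $\delta_i^{-1}\gamma_i$. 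Thus $c$ ranges exactly over $C_\Gamma(S\cup T \cup \{\delta_1^{-1}\gamma_1,\ldots,\delta_{r-1}^{-1}\gamma_{r-1}\})$, and the tuple itself retains the form of an element of $H_{\gamma, \cdot}$, giving the claimed equality.

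For (iii), by (i), $g \in N_{\Gamma^r}(H_{\gamma,S})$ iff $H_{\gamma',S'} = H_{\gamma,S}$. Comparing last coordinates forces $C_\Gamma(g_r S g_r^{-1}) = C_\Gamma(S)$, equivalently $g_r \in N_\Gamma(C_\Gamma(S))$; write $h \coloneqq g_r$. With this constraint the last coordinate gives a bijection between the parameters $c$ on each side, so the first $r-1$ coordinates match exactly when $\gamma_i^{-1} \gamma_i'$ commutes with every element of $C_\Gamma(S)$, i.e.\ $\gamma_i^{-1} g_i \gamma_i g_r^{-1} \in C_\Gamma(C_\Gamma(S))$. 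Setting $k_i \coloneqq \gamma_i^{-1} g_i \gamma_i g_r^{-1}$ and solving for $g_i$ yields the claimed parametrisation $g_i = \gamma_i k_i h \gamma_i^{-1}$. The only potential pitfall is index bookkeeping and keeping the roles of $g_r$ (the ``distinguished'' last factor) and the other $g_i$ separate; once the parametrisation of $H_{\gamma,S}$ is fixed, everything reduces to straightforward algebra, which is why the authors omit the proof.
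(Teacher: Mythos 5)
Your proof is correct, and since the paper explicitly omits the "elementary proof" of this lemma, your direct computation — parametrising $H_{\gamma,S}$ by the last coordinate $c\in C_\Gamma(S)$ and using $C_\Gamma(g_rSg_r^{-1})=g_rC_\Gamma(S)g_r^{-1}$ — is exactly the intended argument. All three parts check out, including the two subtler points in (iii): that equality of subgroups forces equality of their last-coordinate projections (giving $h=g_r\in N_\Gamma(C_\Gamma(S))$), and that matching the remaining coordinates parameter-by-parameter yields $k_i=\gamma_i^{-1}g_i\gamma_ig_r^{-1}\in C_\Gamma(C_\Gamma(S))$.
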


Let $\calf _1\subset \cald$ be the families of subgroups of $\Gamma^r$ defined as
\begin{align*}
    \cald & \coloneqq \calf \spann{\{\Delta(\Gamma)\}} ; \\
    \calf _1 & \coloneqq \calf \spann{\{H_{\gamma,b}\ |\ \gamma\in \Gamma^{r-1},b\in \Gamma\setminus\{e\}\}} .
\end{align*}

\begin{lemma}\label{lem.HnVanish}
    Let $\Gamma$ be a group as in Theorem~\ref{thmx.main} and let $\underline{e}=(e,\dots,e)\in\Gamma^{r-1}$.  Then for $n=\cd(\Gamma^r)$ and every $\calo_\cald(\Gamma^r)$-module $A$, we have
    \[ H^n_{\Gamma^r}(\Gamma^r\times_{H_{\underline{e},e}} E_{\calf_1|_{H_{\underline{e},e}}}(H_{\underline{e},e});A)=0. \]
\end{lemma}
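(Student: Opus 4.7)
The plan is to apply the induction isomorphism in Bredon cohomology,
\[
H^n_{\Gamma^r}\bigl(\Gamma^r\times_{\Delta(\Gamma)}E_{\mathcal{F}_1|_{\Delta(\Gamma)}}(\Delta(\Gamma));A\bigr)\cong H^n_{\Delta(\Gamma)}\bigl(E_{\mathcal{F}_1|_{\Delta(\Gamma)}}(\Delta(\Gamma));A|_{\Delta(\Gamma)}\bigr),
\]
reducing the claim to bounding the Bredon cohomological dimension of $\Delta(\Gamma)\cong\Gamma$ with respect to $\mathcal{F}_1|_{\Delta(\Gamma)}$ strictly below $n=\cd(\Gamma^r)$.

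First I would identify the restricted family explicitly. Using Lemma~\ref{lem:HgOperations}(ii) a direct computation gives
\[
H_{\gamma,S}\cap\Delta(\Gamma)=\Delta\bigl(C_\Gamma(S\cup\{\gamma_1,\ldots,\gamma_{r-1}\})\bigr),
\]
so under the identification $\Delta(\Gamma)\cong\Gamma$ the restricted family corresponds to
\[
\mathcal{F}':=\{C_\Gamma(T) \mid T\subset\Gamma,\ T\cap(\Gamma\setminus\{e\})\neq\emptyset\}.
\]
The malnormality and abelianness of the $P_i$ force $N_\Gamma(P_i)=P_i$, and hence $C_\Gamma(b)=gP_ig^{-1}$ for every $b\in gP_ig^{-1}\setminus\{e\}$; combined with the centraliser hypothesis in Theorem~\ref{thmx.main}, every element of $\mathcal{F}'$ is either cyclic or a conjugate of some $P_i$.

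Let $\mathcal{E}\subset\mathcal{F}'$ denote the subfamily of cyclic members. The maximal elements of $\mathcal{F}'\setminus\mathcal{E}$ are the non-cyclic conjugates $gP_ig^{-1}$: these are self-normalising by malnormality, and one checks $\mathcal{E}|_{P_i}=\TR$ for each non-cyclic $P_i$ (any non-trivial $p\in P_i$ satisfies $C_\Gamma(p)=P_i$, which is not cyclic). Applying Theorem~\ref{prop.LW}(ii) therefore yields a $\Gamma$-pushout whose associated Mayer--Vietoris sequence reduces the desired vanishing to
\begin{enumerate}[label={\rm(\roman*)}]
    \item $H^k_\Gamma(\Gamma\times_{P_i}EP_i;B)=0$ for $k\in\{n-1,n\}$, and
    \item $H^n_\Gamma(E_\mathcal{E}\Gamma;B)=0$.
\end{enumerate}
Claim (i) follows from $H^k_\Gamma(\Gamma\times_{P_i}EP_i;B)\cong H^k(P_i;B(\Gamma/P_i))$ together with $\cd(P_i)+1\leq r\cdot\cd(P_i)=\cd(P_i^r)<\cd(\Gamma^r)$, where I use $r\geq 2$, the torsion-free abelianness of $P_i$, and the hypothesis on $\cd(P_i^r)$.

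The main obstacle is claim (ii). Since under the hypotheses of Theorem~\ref{thmx.main} the Dranishnikov-style product formula gives $\cd(\Gamma^r)=r\cdot\cd(\Gamma)\geq 2\cd(\Gamma)>\cd(\Gamma)+1$, it suffices to show $\cd_{\mathcal{E}}(\Gamma)\leq\cd(\Gamma)+1$. To do so I would apply Theorem~\ref{prop.LW} once more to the pair $\TR\subset\mathcal{E}$: every maximal cyclic $M\in\mathcal{E}$ is self-centralising by hypothesis, so the pushout builds $E_\mathcal{E}\Gamma$ from $E\Gamma$ and pieces of the form $\Gamma\times_{N_\Gamma(M)}E(W_\Gamma(M))$. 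The key subtlety is that the Weyl quotient $W_\Gamma(M)=N_\Gamma(M)/M$ may be $\ZZ/2$ whenever $\Gamma$ admits an inversion of $M$, which would introduce an infinite-dimensional $E(\ZZ/2)$ factor; to handle this I would refine via the commensurability equivalence of L\"uck--Weiermann, replacing each $M$ by its maximal virtually cyclic overgroup $\widetilde{M}\supset M$ (a Klein-bottle-type group with $\cd(\widetilde{M})\leq\cd(\Gamma)$), which is self-normalising and makes the $W$-contribution disappear. This yields the required dimension bound and completes the argument.
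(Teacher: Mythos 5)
Your two-step decomposition $\TR\subset\mathcal{E}\subset\mathcal{F}'$ is a genuinely different route from the paper's, which handles everything in a \emph{single} application of Theorem~\ref{prop.LW}\ref{prop.LW.NM} to the pair $\TR\subset\calf_1|_{H_{\underline{e},e}}$: under the hypotheses of Theorem~\ref{thmx.main} every maximal element of the restricted family --- whether cyclic or of the form $\Delta(gP_ig^{-1})$ --- is self-normalising, so one pushout and one Mayer--Vietoris sequence suffice, using only $\cd(\Gamma)<n$ and $\cd(C_\Gamma(b))<n-1$. Your identification of the restricted family, your verification of $(M)$ and $(NM)$ for the $P_i$-conjugates, and your dimension count in claim (i) are all fine.

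The problem is the final step. The ``key subtlety'' you flag --- that $W_\Gamma(M)$ might be $\ZZ/2$ --- cannot occur, and the workaround you propose does not close the argument. If $h$ normalises a maximal cyclic centraliser $M=\langle c\rangle$ and $hch^{-1}=c^{-1}$, then $h^2\in C_\Gamma(c)=\langle c\rangle$; torsion-freeness excludes $h^2=e$ with $h\neq e$, while $h^2=c^k\neq e$ gives $h\in C_\Gamma(c^k)=\langle c\rangle$ (every centraliser of a non-trivial element is abelian here, so commuting non-trivial elements have equal centralisers), contradicting $hch^{-1}=c^{-1}$. Hence $N_\Gamma(M)=M$, the Weyl group is trivial, and the plain L\"uck--Weiermann pushout for $\TR\subset\mathcal{E}$ already yields claim (ii). By contrast, your proposed replacement of $M$ by a Klein-bottle-type overgroup $\widetilde{M}$ would, if such a group existed, build a classifying space for the family generated by the $\widetilde{M}$'s rather than for $\mathcal{E}$ ($\widetilde{M}$ is non-abelian, so it is not even a member of $\mathcal{F}'$), and you would still owe an argument relating the two; as written this step is rescued only because the scenario it addresses is vacuous. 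A further small point: the product formula $\cd(\Gamma^r)=r\cdot\cd(\Gamma)$ you invoke requires a finiteness hypothesis not assumed in Theorem~\ref{thmx.main}; the bounds actually needed ($\cd(\Gamma)<n$ and $1<n-1$) follow already from $\cd(\Gamma^r)\geq\cd(\Gamma\times\ZZ)=\cd(\Gamma)+1\geq 3$.
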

\begin{proof}
We have that conditions $(M_{\TR\subset \calf _1|_{H_{\underline{e},e}}})$ and $(NM_{\TR\subset \calf _1|_{H_{\underline{e},e}}})$ hold for the group $H_{\underline{e},e}$.  To see this we follow the same argument as in~\cite[Lemma 3.5]{Li2021}, using Lemma~\ref{lem:HgOperations} instead of~\cite[Lemma 3.1]{Li2021}. By Theorem~\ref{prop.LW}\ref{prop.LW.NM}, we obtain an $H_{\underline{e},e}$-pushout
    \begin{equation}\label{eqn.pushout.lemma}
    \begin{tikzcd}
\coprod_{H_{\underline{e},b\in\calm}} H_{\underline{e},e}\times_{H_{\underline{e},b}}E(H_{\underline{e},b}) \arrow[r] \arrow[d] & \arrow[d] E(H_{\underline{e},e}) \\
\coprod_{H_{\underline{e},b\in\calm}} H_{\underline{e},e}/H_{\underline{e},b} \arrow[r] & E_{\calf_1|_{H_{\underline{e},e}}}(H_{\underline{e},e}) ,
\end{tikzcd}\end{equation}
where $\calm$ is a complete set of representatives of conjugacy classes of maximal elements in $\calf_1|_{H_{\underline{e},e}}\backslash \TR$.  Since $\cd(H_{\underline{e},e})<n$ and $\cd(H_{\underline{e},b})<n-1$ for $b\in\Gamma\backslash\{e\}$, the Mayer--Vietoris sequence for $H^\ast_{H_{\underline{e},e}}(-;A)$ applied to the pushout~\eqref{eqn.pushout.lemma} yields the lemma.
\end{proof}

\begin{proof}[Proof of Theorem~\ref{thmx.main}]
Throughout the proof let $n=\cd(\Gamma^r)$.  
We show that the $\Gamma^r$-map $E(\Gamma^r)\to E_\calD(\Gamma^r)$ induces a surjective map $H^n_{\Gamma^r}(E_\calD(\Gamma^r);A)\to H^n_{\Gamma^r}(E(\Gamma^{r});A)$ for every $\mathcal{O}_\calD(\Gamma^r)$-module $A$. Then the theorem follows from Theorem~\ref{thm.Farber-Oprea} and the upper bound $\TC_r(\Gamma)\le n$ (see e.g.~\cite[(4)]{FO2019}).

First, observe that condition $(M_{\TR\subset \calf _1})$ holds and that moreover, for $\gamma\in \Gamma^{r-1}$ and $b\in \Gamma\setminus\{e\}$ there is an isomorphism $N_{\Gamma^r}(H_{\gamma,b})\cong C_\Gamma(b)^r$.  To see this we follow the same argument as in~\cite[Lemma 3.5]{Li2021}, using Lemma~\ref{lem:HgOperations} instead of~\cite[Lemma 3.1]{Li2021}.  It follows that for every $\calo_\cald(\Gamma^r)$-module $A$, we have
\[
    H^{n}_{\Gamma^r}(\Gamma^r\times_{N_{\Gamma^r}(H_{\gamma,b})}E(N_{\Gamma^r}(H_{\gamma,b}));A)=0.
\]
By Theorem~\ref{prop.LW}\ref{prop.LW.TRIV}, we obtain a $\Gamma^r$-pushout
\begin{equation}\label{eqn.pushout1}\begin{tikzcd}
\coprod_{H_{\gamma,b}\in\calm} \Gamma^r\times_{N_{\Gamma^r}(H_{\gamma,b})}E(N_{\Gamma^r}(H_{\gamma,b})) \arrow[r] \arrow[d] & \arrow[d] E(\Gamma^r)  \\
\coprod_{H_{\gamma,b}\in\calm} \Gamma^r\times_{N_{\Gamma^r}(H_{\gamma,b})}E(W_{\Gamma^r}(H_{\gamma,b})) \arrow[r] & E_{\calf_1}(\Gamma^r),
\end{tikzcd}\end{equation}
where $\mathcal{M}$ is a complete set of representatives of conjugacy classes of maximal elements in $\F_1\setminus \TR$.
Applying the Mayer--Vietoris sequence for $H^\ast_{\Gamma^r}(-;A)$ to the pushout~\eqref{eqn.pushout1} shows that the induced map $H^{n}_{\Gamma^r}(E_{\F_1}(\Gamma^r);A) \to H^{n}_{\Gamma^r}(E(\Gamma^r);A)$ is surjective.

Second, conditions $(M_{\calf _1\subset \cald})$ and $(NM_{\calf _1\subset \cald})$ hold by the same argument as in~\cite[Lemma 3.2]{Li2021}, using Lemma~\ref{lem:HgOperations} instead of~\cite[Lemma 3.1]{Li2021}. By Theorem~\ref{prop.LW}\ref{prop.LW.NM}, we obtain a $\Gamma^r$-pushout
\begin{equation}\label{eqn.pushout2}\begin{tikzcd}
\Gamma^r\times_{H_{\underline{e},e}}E_{\calf_1|_{H_{\underline{e},e}}}(H_{\underline{e},e}) \arrow[r] \arrow[d] & \arrow[d] E_{\calf_1}(\Gamma^r) \\
\Gamma^r/H_{\underline{e},e} \arrow[r] & E_\cald(\Gamma^r).
\end{tikzcd}\end{equation}
Lemma~\ref{lem.HnVanish} and the Mayer--Vietoris sequence for $H^\ast_{\Gamma^r}(-;A)$ applied to the pushout~\eqref{eqn.pushout2} yield that the map $H^{n}_{\Gamma^r}(E_\calD (\Gamma^r);A)\to H^{n}_{\Gamma^r}(E_{\F_1} (\Gamma^r);A)$ is surjective.

Together, the map $H^n_{\Gamma^r}(E_\cald (\Gamma^r); A)\to H^n_{\Gamma^r}(E(\Gamma^r); A)$ is surjective for every $\calo_\cald(\Gamma^r)$-module $A$. This finishes the proof.
\end{proof}

\begin{proof}[Proof of Corollary~\ref{corx.growth}]
    For groups $\Gamma$ of type $F$, we have $\cd(\Gamma^r)=r\cdot \cd(\Gamma)$ by \cite[Corollary 2.5]{Dranishnikov19}. The result now follows from Theorem~\ref{thmx.main}.
\end{proof}

\begin{proof}[Proof of Corollary~\ref{corx.hyp}]
    The result follows from Corollary~\ref{corx.growth} using the fact that torsion-free hyperbolic groups are of type $F$ (see e.g.~\cite[Corollary 3.26 on page 470]{Bridson-Haefliger99}). 
\end{proof}

\AtNextBibliography{\small}
\printbibliography

\end{document}